\newtheorem{theorem}{Theorem}
\newtheorem{definition}[theorem]{Definition}
\newtheorem{conjecture}[theorem]{Conjecture}
\newtheorem{observation}[theorem]{Observation}
\newtheorem{proposition}[theorem]{Proposition}
\newtheorem{question}[theorem]{Question}
\newtheorem*{experiment}{Experimental results}
\newcommand{\Av}{\operatorname{Av}}
\newcommand{\Grid}{\operatorname{Grid}}
\newcommand{\Inv}{\operatorname{Inv}}
\newcommand{\we}{\equiv}
\newcommand{\C}{{\mathcal{C}}}
\newcommand{\D}{{\mathcal{D}}}
\newcommand{\E}{{\mathcal{E}}}
\newcommand{\F}{{\mathcal{F}}}
\newcommand{\I}{{\mathcal{I}}}
\renewcommand{\S}{{\mathcal{S}}}
\author{Michael Albert\affiliationmark{1} \and Jinge Li\affiliationmark{1}}
\title[Uniquely-Wilf classes]{Uniquely-Wilf classes}
\affiliation{Department of Computer Science, University of Otago}
\begin{document}
\publicationdetails{21}{2019}{2}{7}{5374}
\maketitle
\begin{abstract}
Two permutations in a class are Wilf-equivalent if, for every size, $n$, the number of permutations in the class of size $n$ containing each of them is the same. Those infinite classes that have only one equivalence class in each size for this relation are characterised provided either that they avoid at least one permutation of size 3, or at least three permutations of size 4.
\end{abstract}

\section{Introduction}

In the classical study of permutation patterns, two permutations $\pi$ and $\sigma$ are said to be \emph{Wilf-equivalent} if the class, $\Av(\pi)$, of permutations avoiding $\pi$ and the class, $\Av(\sigma)$, of permutations avoiding $\sigma$ have the same enumeration sequence. Notational definitions are given in the next section. Arguably, one of the main results that initiated this area of study was that any two permutations of size 3 are Wilf-equivalent. Due to underlying symmetries this relation could have had at most two equivalence classes on permutations of size 3. It has since been discovered that among the permutations of size 4 there are only three Wilf-equivalence classes while, taking only symmetry into account, there could have been up to seven.

In this paper we are concerned with a relativised version of Wilf-equivalence. Rather than considering all the permutations that avoid $\pi$ we consider only those that belong to some given class $\C$ -- so the original notion just corresponds to taking $\C$ to be the class of all permutations. 

\begin{definition}
Let $\C$ be a permutation class. Two permutations $\pi, \sigma \in \C$ are \emph{Wilf-equivalent relative to $\C$} if the two classes $\C \cap \Av(\pi)$ and $\C \cap \Av(\sigma)$ have the same enumeration sequence. In this case we write $\pi \we_{\C} \sigma$. 
\end{definition}

If $\C$ is clear from context we sometimes omit mentioning it.

Let $(c_n)_{n \in \naturals}$ be the enumeration sequence of a class $\C$. If $\pi \in \C$ has size $k$ then the enumeration sequence $(p_n)_{n \in \naturals}$ of $\C \cap \Av(\pi)$ satisfies $p_n = c_n$ for $n < k$ and $p_k = c_k - 1$. Therefore, if $\pi$ and $\sigma$ are Wilf-equivalent relative to $\C$ they must have the same size. 

\begin{definition}
Let $\C$ be a permutation class. The \emph{Wilf-sequence} of $\C$ is the sequence $(w_n)_{n \in \naturals}$ where $w_n$ is the number of $\we_\C$-equivalence classes among the permutations of $\C$ of size $n$.
\end{definition}

Obviously $w_n \leqslant c_n$ in general but it has been observed in multiple contexts that it is frequently the case that $w_n = o(c_n)$. We refer to this phenomenon as a \emph{Wilf-collapse}. Since it is known that the partially ordered sets corresponding to permutation classes do not have infinite automorphism groups (\cite{AAC15}) this cannot occur solely due to global symmetries of the containment relation within a pattern class. Our particular concern in this paper is with the ultimate Wilf-collapse:

\begin{definition}
A permutation class $\C$ is \emph{uniquely-Wilf} if the equivalence relation $\we_{\C}$ has only one equivalence class on permutations of size $n$ in $\C$ for all $n$.
\end{definition}

In \cite{AlbertLi} it was shown that the class $\Av(312, 213)$ is uniquely-Wilf. It is the aim of this paper to explore the landscape of uniquely-Wilf classes with the ultimate (but unrealised) goal of classifying them completely.

\section{Preliminaries}

In this section we introduce our notation, definitions and make some basic observations. Notations chosen are somewhat standard in the field of classical permutation patterns so we shall be quite brief. A primer on this notation can be found in \cite{BevanNotes} and for introductions to and/or surveys of the field we suggest the books \cite{BonaBook} and \cite{KitaevBook} and a survey article \cite{VatterSurvey}.

We denote by $\S$ the class of all finite permutations. For a set of permutations $X$ we denote the set of permutations in $X$ of size $k$ by $X_k$ and similarly use $X_{\leq k}$ etc. The \emph{enumeration sequence} $(x_n)_{n \in \mathbb{N}}$ of $X$ is defined by $x_k = | X_k |$. 

The set $\S$ carries a partial order $\leq$ called \emph{classical pattern containment}. Specifically, $\pi \leq \sigma$ if $\sigma$, considered as a sequence in one line notation, contains a subsequence order-isomorphic to $\pi$. There are eight automorphisms of $\S$ as a partially ordered set generated by the operations on sequences of: reverse, complement (subtract each term for a permutation of size $n$ from $n+1$) and inverse. All the questions we consider respect these automorphisms.

If $\pi \leq \sigma$ we say that $\sigma$ \emph{involves} $\pi$, while if $\pi \not \leq \sigma$ we say that $\sigma$ \emph{avoids} $\pi$. Further we define:
\begin{align*}
\Inv(\pi) &= \{ \sigma \in \S \, : \, \pi \leq \sigma \} \\
\Av(\pi) &= \{ \sigma \in \S \, : \, \pi \not \leq \sigma \}.
\end{align*}
When $\pi \leq \sigma$ and $|\sigma| = |\pi| + 1$ we say that $\sigma$ \emph{covers} $\pi$.

The $\Av(\pi)$ notation is extended to sets of permutations with the requirement that all the permutations in the set must be avoided. A set of permutations is a \emph{class} if it is downwards-closed with respect to $\leq$ or equivalently if it is of the form $\Av(X)$ for some set of permutations $X$. As already seen we generally use Greek letters to denote individual permutations, upper case Latin letters to denote sets of permutations, and calligraphic upper case Latin letters to denote permutation classes. Additionally we set $\I = \Av(21)$ to be the class of all monotone-increasing permutations and $\D = \Av(12)$ to be the class of all monotone-decreasing permutations.

\begin{definition}
Let $k$ and $n$ be positive integers. A permutation class $\C$ is \emph{$(k,n)$-balanced} if, for all $\pi, \sigma \in \C_k$, $|\C_n \cap \Inv(\pi)| = |\C_n \cap \Inv(\sigma)|$.
\end{definition}

Any permutation class is $(k,n)$-balanced for any $k \geq n$ (since the two sets in question are singletons if $k = n$ and empty otherwise) so the issue of whether or not a class is $(k,n)$-balanced is really of interest only if $k < n$.

\begin{observation}
A permutation class $\C$ is uniquely-Wilf if and only if it is $(k,n)$-balanced for all pairs of positive integers $(k,n)$.
\end{observation}

\begin{proof}
Suppose that $\C$ is uniquely-Wilf and that $\pi, \sigma \in \C_k$. Then $\pi \we_{\C} \tau$ so the two classes $\C \cap \Av(\pi)$ and $\C \cap \Av(\sigma)$ have the same enumeration sequence. In particular $|\C_n \cap \Av(\pi)| = | \C_n \cap \Av(\sigma)|$. But $\C_n \cap \Inv(\pi) = \C_n \setminus (\C_n \cap \Av(\pi))$ and similarly for $\sigma$ so this is equivalent to $|\C_n \cap \Inv(\pi)| = | \C_n \cap \Inv(\sigma) |$, i.e., $\C$ is $(k,n)$-balanced.

Conversely, suppose that $\C$ is $(k,n)$-balanced for all pairs of positive integers $(k,n)$. Let $\pi, \sigma \in \C_k$ be given. Then, for any $n$, 
\[
| \Av(\pi) \cap \C_n | = c_n - | \Inv(\pi) \cap \C_n | = c_n - | \Inv(\sigma) \cap \C_n | = | \Av(\sigma) \cap \C_n |,
\]
i.e., $\pi \we_{\C} \sigma$, hence $\C$ is uniquely-Wilf.
\end{proof}

\begin{observation} 
\label{obs:monotone}
Let $\C$ be a uniquely-Wilf class that contains both $12$ and $21$. Then, for each $k$, either $\C$ contains both $12 \cdots k$ and $k (k-1) \cdots 1$ or it contains neither of those permutations. In the latter case $\C$ is finite.
\end{observation}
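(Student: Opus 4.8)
The plan is to lean entirely on the preceding Observation, which tells us that a uniquely-Wilf class is $(k,n)$-balanced for every pair $(k,n)$; the whole statement then falls out of the $(2,n)$-balanced instances together with the Erd\H{o}s--Szekeres theorem. First I would fix the two elements $12, 21 \in \C_2$, which are available by hypothesis, and apply the definition of $(2,n)$-balance to this pair. This yields $|\C_n \cap \Inv(12)| = |\C_n \cap \Inv(21)|$ for every $n$, and since for a fixed size $\Inv$ and $\Av$ partition $\C_n$, this is equivalent to $|\C_n \cap \Av(12)| = |\C_n \cap \Av(21)|$.

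The key point for the first claim is that these two avoidance sets are as small as they can be. A size-$n$ permutation avoids $12$ exactly when it is decreasing and avoids $21$ exactly when it is increasing, so in the notation of the preliminaries $\C_n \cap \Av(12) = \C_n \cap \D$ and $\C_n \cap \Av(21) = \C_n \cap \I$. Each of $\D$ and $\I$ has a single permutation of size $n$, namely $n(n-1)\cdots 1$ and $12\cdots n$ respectively, so both intersections have cardinality $0$ or $1$, the value being $1$ precisely when the corresponding monotone permutation lies in $\C$. Equality of the two cardinalities therefore says that $12\cdots n \in \C$ if and only if $n(n-1)\cdots 1 \in \C$, which is exactly the asserted dichotomy (taking $n=k$); the size-$1$ case is trivial since $\C$ contains $12$ and hence $1$.

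It remains to treat the case in which, for some $k$, the class contains neither $12\cdots k$ nor $k(k-1)\cdots 1$. Here I would note that $\C \subseteq \Av(12\cdots k,\, k(k-1)\cdots 1)$ and invoke the Erd\H{o}s--Szekeres theorem: any permutation of size greater than $(k-1)^2$ contains a monotone subsequence of length $k$, that is, it involves one of $12\cdots k$ or $k(k-1)\cdots 1$. Consequently every permutation of $\C$ has size at most $(k-1)^2$, so $\C$ is finite.

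Both halves of the argument are short, and I would emphasise that there is no genuinely hard step so much as two reductions to recognise. The substantive content of the first part is that the single balance condition at $k=2$ already pins down the behaviour of all the monotone permutations, so no induction on $k$ or appeal to larger patterns is needed. The substantive content of the second part is that finiteness is an external Erd\H{o}s--Szekeres phenomenon, arising purely from simultaneously avoiding a long increasing and a long decreasing permutation, rather than from anything intrinsic to being uniquely-Wilf. The only care required is in verifying that the relevant $\Av$-intersections are exactly the monotone singletons and in quoting Erd\H{o}s--Szekeres with the correct bound.
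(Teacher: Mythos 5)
Your proof is correct and follows essentially the same route as the paper's: both halves hinge on the observation that $\C_k \cap \Av(12)$ and $\C_k \cap \Av(21)$ are singletons or empty according to whether the relevant monotone permutation lies in $\C$, and both invoke Erd\H{o}s--Szekeres for finiteness. The only cosmetic difference is that you route the first half through the $(2,n)$-balance reformulation, whereas the paper argues directly that $12 \not\we_{\C} 21$; these are interchangeable by the paper's earlier observation.
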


\begin{proof}
If, for some $k$, $\C$ contained $12 \cdots k$ but not $k (k-1) \cdots 1$ then $12 \not \we_{\C} 21$ since $\C$ contains a permutation of size $k$ avoiding 21 but none avoiding $12$. This contradiction establishes the first claimed result. The second part follows immediately since, by the Erd\H{o}s-Szekeres theorem (\cite{ES}), a class containing neither $12 \cdots k$ nor $k (k-1) \cdots 1$ cannot contain any permutation of size greater than $(k-1)^2$.
\end{proof}

Observation \ref{obs:monotone} establishes a trichotomy for uniquely-Wilf classes: they might be finite, consist only of monotone permutations of a single type, or contain both monotone permutations of every size. Somewhat disingenuously we consider the first two cases as trivial ones and focus on the last. This is disingenuous since any uniquely-Wilf class $\C$ is the union of all the finite uniquely-Wilf classes $\C_{\leq k}$ for each positive integer $k$. This observation is central to the experimental results described in the next section.

\section{Experimental results}

Let $\F$ be a finite uniquely-Wilf class whose permutations are all of size at most $n$. We are interested in determining those uniquely-Wilf classes $\C$ with the property that $\C \cap \S_{\leq n} = \F$. This condition already puts some restrictions on $\C$ -- specifically that any permutation of size at most $n$ that does not belong to $\F$ is avoided by all the elements of $\C$, i.e., $\C \subseteq \Av(\S_{\leq n} \setminus \F)$. For convenience we denote $\Av(\S_{\leq n} \setminus \F)$ by $\F^{\uparrow}$ -- this is the maximum class (in terms of set containment) whose restriction to $\S_{\leq n}$ is $\F$.

\begin{observation}
Let $\C$ be a permutation class and let $k$ be a positive integer. Then $\C = \left( \C_{\leq k} \right)^{\uparrow}$ if and only if $\C = \Av(X)$ for some $X \subseteq \S_{\leq k}$.
\end{observation}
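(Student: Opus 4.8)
The statement is a biconditional, so the plan is to prove the two implications separately, and to observe at the outset that the forward direction is essentially a matter of unwinding the definition. Indeed, if $\C = \left( \C_{\leq k} \right)^{\uparrow}$, then by the definition of the $\uparrow$ operation we have $\C = \Av(\S_{\leq k} \setminus \C_{\leq k})$, and since $\S_{\leq k} \setminus \C_{\leq k} \subseteq \S_{\leq k}$ we may simply take $X = \S_{\leq k} \setminus \C_{\leq k}$ to witness $\C = \Av(X)$ with $X \subseteq \S_{\leq k}$. So this direction requires no real work.

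The content lies in the converse. Assume $\C = \Av(X)$ for some $X \subseteq \S_{\leq k}$, and set $Y = \S_{\leq k} \setminus \C_{\leq k}$, so that by definition $\left( \C_{\leq k} \right)^{\uparrow} = \Av(Y)$. The goal then reduces to establishing the set equality $\Av(X) = \Av(Y)$, which I would prove by showing each inclusion via a contrapositive containment argument, the essential ingredient being that $\C = \Av(X)$ is downward-closed under $\leq$.

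For the inclusion $\Av(X) \subseteq \Av(Y)$, I would take $\sigma \in \C$ and suppose toward a contradiction that $\sigma$ involves some $\tau \in Y$; downward-closure of $\C$ then forces $\tau \in \C$, and since $|\tau| \leq k$ this means $\tau \in \C_{\leq k}$, contradicting $\tau \in Y = \S_{\leq k} \setminus \C_{\leq k}$. For the reverse inclusion $\Av(Y) \subseteq \Av(X)$, I would take $\sigma \notin \C = \Av(X)$, so that $\sigma$ involves some $\rho \in X$; since $X \subseteq \S_{\leq k}$ we have $|\rho| \leq k$, and as $\rho$ involves itself it cannot lie in $\Av(X) = \C$, whence $\rho \in \S_{\leq k} \setminus \C_{\leq k} = Y$, so $\sigma$ involves an element of $Y$ and thus $\sigma \notin \Av(Y)$. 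Combining the inclusions gives $\Av(X) = \Av(Y) = \left( \C_{\leq k} \right)^{\uparrow}$, as required.

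There is no genuine obstacle here beyond bookkeeping; the one point that must be handled carefully is ensuring that every containment witness invoked has size at most $k$, which is exactly what the hypothesis $X \subseteq \S_{\leq k}$ guarantees in the reverse inclusion, and which the size bound on elements of $Y$ guarantees in the forward one. Note also that $X$ need not be an antichain or the basis of $\C$ for the argument to go through, so no minimality assumption on $X$ is needed.
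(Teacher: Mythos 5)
Your proof is correct and follows essentially the same route as the paper's: the forward direction is definitional with $X = \S_{\leq k} \setminus \C_{\leq k}$, and the converse amounts to the two containments $\Av(X) \subseteq \Av(\S_{\leq k} \setminus \C_{\leq k})$ (via downward closure) and the reverse (via $X \subseteq \S_{\leq k} \setminus \C_{\leq k}$, since no element of $X$ can avoid itself). Your write-up is merely more explicit, arguing element-by-element where the paper argues by set containment, so nothing further is needed.
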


\begin{proof}
This is essentially a restatement of the definition. If $\C =  \left( \C_{\leq k} \right)^{\uparrow}$ then we can take $X$ to be $\Av(\S_{\leq k} \setminus \C_{\leq k})$. Conversely if $\C = \Av(X)$ for some $X \subseteq \S_{\leq k}$ then $X \subseteq \S_{\leq k} \setminus \C_{\leq k}$ and so $\Av(\S_{\leq k} \setminus \C_{\leq k}) \subseteq \C$. But the reverse containment always holds so $\C = \Av(\S_{\leq k} \setminus \C_{\leq k})$.
\end{proof}

If $\F \subseteq \S_{\leq n}$ is a finite uniquely-Wilf class, then an obvious question to address is:
\begin{quote}
\emph{Which subsets $X \subseteq \S_{n+1}$ have the property that $\F \cup X$ is a uniquely-Wilf class?}
\end{quote}

Call such subsets \emph{potential extensions} of $\F$ to size $n+1$. It's quite easy to determine whether $X$ is a potential extension of $\F$ of size $n+1$. First of all $X$ must not contain any permutations that cannot lie in any extension of $\F$, i.e., $X \subseteq \F^{\uparrow}$. This ensures that $\F \cup X$ is a class. Secondly,  $\F \cup X$ must be $(k, n+1)$-balanced for all $k \leq n$. Since $\F$ was already $(k, m)$ balanced for all $m \leq n$ and $\F \cup X$ contains no permutations of size greater than $n+1$ these two conditions are both necessary and sufficient for $X$ to be a potential extension of $\F$.

This suggests an experimental technique to build uniquely-Wilf classes from the bottom up. Start with a finite permutation class $\F$ that is uniquely-Wilf and contains permutations up to size $n$. Consider binary vectors (i.e., indicator functions) $\mathbf{x} : \left( \F^{\uparrow} \right)_{n+1} \to \{0,1\}$ where, as usual we denote $\mathbf{x}(\pi)$ by $\mathbf{x}_\pi$ and associate $\mathbf{x}$ with the set $X = \mathbf{x}^{-1}(1)$. Then \textit{par abus de langage} we call $\mathbf{x}$ a potential extension of $\F$ to size $n+1$ if the associated set $X$ is. The important thing is that this construction already captures the first condition above since we only consider permutations from $\F^{\uparrow}$. Each of the balancing conditions that any two permutations of the same size, say $\sigma, \tau \in \F_k$, have the same number of extensions in $X$ can be expressed as a constraint:
\[
\sum_{\sigma \leq \pi} \mathbf{x}_{\pi} - \sum_{\tau \leq \pi} \mathbf{x}_{\pi} = 0.
\]

Here, and in the following few equations, $\pi$ ranges over $\left( \F^{\uparrow} \right)$ (i.e., the set of permutations on which the notation  $\mathbf{x}_{\pi}$ is defined).

From a practical standpoint we might wish to change the form in which we express these constraints. For instance we could use instead:
\[
\sum_{\sigma \leq \pi, \tau \not \leq \pi} \mathbf{x}_{\pi} - \sum_{\tau \leq \pi, \sigma \not \leq \pi} \mathbf{x}_{\pi} = 0.
\]
Or, for each $2 \leq k \leq n$ we could assign a target, $t_k$, with 
\[
0 \leq t_k \leq \left| \left(\F^{\uparrow} \right)_{n+1} \right|
\]
and, for $\sigma \in \F_k$, use the constraint
\[
\sum_{\sigma \leq \pi} \mathbf{x}_{\pi}  = t_k.
\]
Or we could simply look for all solutions that give $(n, n+1)$-balance (possibly with a target) and then filter out those that do not also give $(k, n+1)$-balance for $k < n$.

Whatever our chosen system of constraints might be it can then be passed to a constraint solver. We used the system Minion (\cite{MINION}). All solutions can be found and then (if necessary) these can be filtered to provide all the potential extensions. Then we can proceed in the same manner on each of these (trying to further extend to size $n+2$) until either some resolution is achieved or we exhaust our patience or machine resources. What sorts of resolution can we hope for?
\begin{itemize}
\item
We might find that none of the potential extensions of $\F$ contain the monotone permutations of size $n+1$. In this case we know that there is no infinite uniquely-Wilf class whose restrictions to permutations of size at most $n$ is $\F$.
\item
We might find that the only potential extension of $\F$ that contains both monotone permutations of size $n+1$ is $\F^{\uparrow}_{n+1}$. If this continues to be the case when we extend to $n+2$, $n+3$ etc.\ then we might form the opinion that $\F^{\uparrow}$ is uniquely-Wilf and is the only such class whose restriction to permutations of size at most $n$ is $\F$. However, this does not constitute a proof of that opinion -- but it is something we can attempt to prove.
\end{itemize}

Since we are interested only in infinite uniquely-Wilf classes containing both 12 and 21 our starting point is the finite class $\S_{\leq 2} = \{1, 12, 21\}$. The $(1,n)$-balancing condition is always trivially satisfied and $(2,n)$-balance is satisfied whenever we include both monotone permutations of size $n$. So our first sets of possible extensions are the union of $\{123, 321\}$ with each of the sixteen subsets of $\{132, 213, 231, 312\}$. Since all questions about Wilf-equivalence are invariant under the action of the automorphism group of $(\S, \leq)$ we can further restrict our experiments to one representative of each orbit of that group on sets of permutations. Our findings are as follows:

\begin{experiment}
Let $\F \subseteq \S_{\leq 3}$ be a finite class with $\F_2 = \{12, 21\}$. Then:
\begin{enumerate}
\item
If $|\F_3| = 2$ the only infinite uniquely-Wilf class containing $\F$ is $\F^{\uparrow} = \I \cup \D$.
\item
If $|\F_3| = 3$ the only infinite uniquely-Wilf class containing $\F$ is $\F^{\uparrow}$.
\item
If $|\F_3| = 4$ and $\F_3 = \{123, 213, 312, 321\}$ or one of its symmetries then the only infinite uniquely-Wilf class containing $\F$ is $\F^{\uparrow}$ while, for all other choices of $\F_3$ with $|\F_3| = 4$, there are no such classes.
\item
If $|\F_3| = 5$  then there is no infinite uniquely-Wilf class containing $\F$.
\item
If $\F_3 = \S_3$ and $|\F_4| < 22$ then there are no infinite uniquely-Wilf classes containing $\F$.
\end{enumerate}
\end{experiment}

In the next section we prove the first and second results above along with the first half of the third result. Provided that we trust our constraint solver (which we do) and our translation of the constraints into code\footnote{This code is freely available from the authors on request. Owing to its specialised nature we do not feel it appropriate, at this time, to make it part of a more widely available repository.} for the constraint solver (which we have to) the second half of the third result, the fourth result, and the fifth result require no further proof. 

Of note is that we have not completely classified infinite uniquely-Wilf classes since the cases where $\F_3 = \S_3$ and $|\F_4| \in \{22, 23, 24\}$ remain open -- this is where exhaustion set in as there were simply too many possible extensions to deal with using our present ideas. While the case $|\F_4| = 22$ could probably be handled by our current techniques using some additional computational power it does seem that the remaining cases are definitely out of reach using the straight forward constraints presented here. In order to complete the classification would seem to require either determining some new constraints that have been overlooked, some clever ordering of the CSP that facilitates its (non-)solution, or a new structural idea.

However, we are confident enough to present:

\begin{conjecture}
There are no infinite uniquely-Wilf classes, $\C$, with $\C_3 = \S_3$.
\end{conjecture}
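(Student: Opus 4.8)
The plan is to reduce to a handful of base cases at size $4$ and then attack each asymptotically. By Observation~\ref{obs:monotone}, any infinite uniquely-Wilf class $\C$ with $\C_3 = \S_3$ contains $\I$ and $\D$, so $1234, 4321 \in \C_4$; and by the characterisation of uniquely-Wilf classes in terms of $(k,n)$-balance, $\C_4 \subseteq \S_4$ must be both $(2,4)$- and $(3,4)$-balanced. The identity that drives everything is the double count
\[
\sum_{\pi \in \C_k} |\C_n \cap \Inv(\pi)| \;=\; \sum_{\rho \in \C_n} d_k(\rho),
\]
where $d_k(\rho)$ is the number of distinct size-$k$ patterns involved in $\rho$; combined with $(k,n)$-balance it gives $\sum_{\rho \in \C_n} d_k(\rho) = c_k\, b_{k,n}$, where $b_{k,n}$ is the common value of $|\C_n \cap \Inv(\pi)|$ over $\pi \in \C_k$. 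Already at $(k,n)=(3,4)$ this is sharp: each size-$3$ pattern is avoided by $14$ of the $24$ permutations of size $4$ and so involved in exactly $10$ of them, while a single size-$4$ permutation involves between one and four of the six size-$3$ patterns. Hence deleting one permutation from $\S_4$ breaks $(3,4)$-balance, $|\C_4| = 23$ is impossible, and --- since the Experimental results already eliminate $|\C_4| < 22$ --- the surviving possibilities are, up to the symmetries of $(\S,\le)$, only $\C_4 = \S_4$ and the configuration obtained by deleting the symmetry class $\{1324, 4231\}$ (which remains balanced precisely because that class is closed under all the symmetries and its two members carry complementary sets of size-$3$ patterns), whence $\C \subseteq \Av(1324, 4231)$.

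The engine for the two surviving base cases is the classical trichotomy of $\S_4$: the Wilf classes $\Av(1342)$, $\Av(1234)$ and $\Av(1324)$ have growth rates $8$, $9$ and approximately $11.60$, and already at size $6$ their enumerations diverge ($512$ against $513$). When the class $\{1324,4231\}$ is deleted, exactly the fastest-growing class disappears and $\C$ retains representatives of the $1234$- and $1342$-classes; when $\C_4 = \S_4$ all three survive. In either case $(4,n)$-balance forces $|\C_n \cap \Av(\pi)| = |\C_n \cap \Av(\sigma)|$ for patterns $\pi, \sigma$ whose absolute avoidance classes are known to grow at different rates. For the extreme class $\C = \S$ this is an immediate contradiction at size $6$, which I would dispatch first; the real work is to carry the incompatibility down to an arbitrary infinite $\C$ lying below $\S$ (when $\C_4 = \S_4$) or below $\Av(1324,4231)$ (when $|\C_4| = 22$).

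This is where the main obstacle sits, and it is exactly the wall met in the Experimental results. The $(k,n)$-balance conditions are first-order, averaged constraints: they equate containment counts but are insensitive to the fine asymptotics of the Wilf classes, and at each size they admit a broad family of design-like balanced subsets $\C_n \subseteq \S_n$ --- these are precisely the solutions the constraint solver returns. Consequently no single level yields a contradiction: each base case can be extended for many steps, and the relative quantity $|\C_n \cap \Av(\pi)|$ need not inherit the growth rate of the absolute class $\Av(\pi)$, since a proper balanced $\C$ may redistribute its cardinalities across sizes. The conjecture therefore cannot be reduced to a bounded computation by the present constraints alone; what must be ruled out is an infinite chain of balanced extensions.

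To break the chain I would pursue two complementary routes. The first is to enlarge the constraint system with necessary conditions invisible to bare $(k,n)$-balance --- for example balance of the joint containment counts $|\C_n \cap \Inv(\pi) \cap \Inv(\sigma)|$ obtained by inclusion--exclusion, or the equalities produced by iterating Wilf-equivalence inside the subclasses $\C \cap \Av(\pi)$ --- in the hope that a rich enough linear family forces the extension process to terminate at a finite level, exactly as it provably does for $|\C_4| < 22$. The second, and to my mind the more promising, is structural: to show that the balance conditions eventually pin $\C$ down to a geometrically griddable or substitution-closed shape, a regime in which growth rates become computable and the Wilf-class gap of the second paragraph can finally be made to bite, thereby collapsing the infinite problem to a finite algebraic check. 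I expect this structural input to be both the decisive step and the principal difficulty: nothing in the definition of uniquely-Wilf obviously supplies such rigidity, and without it the conjecture appears to call for genuinely new combinatorics rather than merely more computation.
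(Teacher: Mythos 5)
You have chosen a statement that the paper itself does not prove: it is stated as a conjecture precisely because the cases $\C_3 = \S_3$ with $|\C_4| \geq 22$ defeated both the authors' constraint-solver search and their structural techniques. Read as a research plan your proposal correctly locates the difficulty, but it is not a proof, and one of the concrete steps you do assert is wrong. Your reduction of the base cases fails: $(3,4)$-balance for a $22$-element $\C_4$ requires deleting a pair $Y \subseteq \S_4$ whose two members carry complementary sets of size-$3$ patterns, and $\{1324, 4231\}$ is far from the only such pair. The pairs $\{1342, 4213\}$, $\{1423, 3241\}$ and $\{2431, 3124\}$ all have this property (for instance $1342$ contains exactly $\{123,132,231\}$ and $4213$ exactly $\{213,312,321\}$), they avoid the monotone permutations so $(2,4)$-balance also holds, and none of them is a symmetry image of $\{1324,4231\}$, since the orbit of $1342$ under the eight symmetries is disjoint from that of $1324$. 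Your justification --- that $\{1324,4231\}$ survives ``because that class is closed under all the symmetries'' --- is a non sequitur: closure under the symmetries is neither necessary nor sufficient for balance, as these examples show. (Your observation that $|\C_4|=23$ is impossible is correct, and mildly sharpens the paper's remark that the open cases have at least $22$ elements of $\S_4$.)

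More fundamentally, the engine you propose for the surviving cases does not run, as you yourself concede in your third paragraph. Relative Wilf-equivalence inside a proper class $\C$ imposes no a priori relation on the absolute enumerations of $\Av(1234)$, $\Av(1342)$ and $\Av(1324)$, so the classical growth-rate gap yields a contradiction only for $\C = \S$ (where $513 \neq 512$ at size $6$ indeed settles matters); carrying any such incompatibility down to an arbitrary infinite balanced subclass is exactly the open problem. The two escape routes you sketch --- enriching the linear constraint system, or forcing $\C$ into a griddable or polynomial regime where a ``surplus covers'' argument bites --- are sensible and consistent with the paper's own methods (Proposition \ref{prop:wedge} uses precisely the Homberger--Vatter polynomial-class machinery in the cases it can reach), but neither is carried out here. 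As written, the proposal establishes nothing beyond what the paper already records, and the conjecture remains open.
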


\section{Proofs}

In this section we provide the required proofs that justify the resolutions of the experiment above.

\begin{proposition}
The only infinite uniquely-Wilf class, $\C$ with $\C_3 = \{123, 321\}$ is $\I \cup \D$.
\end{proposition}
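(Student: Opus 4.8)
The plan is to sandwich $\C$ between $\I \cup \D$ from both sides: first establish the upper bound $\C \subseteq \I \cup \D$ using only that $\C$ is a class with $\C_3 = \{123,321\}$, and then establish the reverse inclusion $\I \cup \D \subseteq \C$ using the uniquely-Wilf hypothesis via Observation \ref{obs:monotone}.

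For the upper bound, the key point is that $\C_3 = \{123,321\}$ forces $\C$ to avoid each of the four non-monotone permutations $132, 213, 231, 312$; indeed if $\C$ involved any one of these it would appear in $\C_3$. So $\C \subseteq \Av(132,213,231,312)$, and the first real step is to identify this avoidance class. I would show that a permutation all of whose length-$3$ patterns are monotone is itself monotone. If some $\sigma \in \C$ were neither increasing nor decreasing, then reading off the ascent/descent labels of its consecutive pairs we would find both labels present, and hence two consecutive positions of opposite type: either a ``peak'' $\sigma(a) < \sigma(a+1) > \sigma(a+2)$ or a ``valley'' $\sigma(a) > \sigma(a+1) < \sigma(a+2)$. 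A peak is order-isomorphic to $132$ or $231$ and a valley to $213$ or $312$, so either case contradicts $\sigma \in \Av(132,213,231,312)$. This gives $\Av(132,213,231,312) = \I \cup \D$ and therefore $\C \subseteq \I \cup \D$.

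For the lower bound I would observe that $\C$ contains $123$ and $321$, hence their sub-patterns $12$ and $21$. Since $\C$ is uniquely-Wilf and infinite, Observation \ref{obs:monotone} applies: for each $k$, either $\C$ contains both $12\cdots k$ and $k(k-1)\cdots 1$ or it contains neither, and in the ``neither'' case $\C$ is forced to be finite. As $\C$ is infinite the ``neither'' case never arises, so $\C$ contains every monotone permutation, i.e. $\I \cup \D \subseteq \C$. Combining the two inclusions yields $\C = \I \cup \D$. It is also worth remarking that $\I \cup \D$ genuinely is uniquely-Wilf: for $n \geq 2$ its only two permutations of size $n$ are $12\cdots n$ and $n \cdots 21$, which are interchanged by the reverse symmetry while $\I \cup \D$ is fixed, so the relevant $(n,m)$-balancing counts coincide.

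I do not anticipate a serious obstacle, as the argument is short; the only slightly computational point is the characterization of $\Av(132,213,231,312)$, and that reduces to the elementary peak/valley observation above. What is worth emphasizing is where each hypothesis is used: the inclusion $\C \subseteq \I \cup \D$ needs only the class structure and the size-$3$ profile, whereas the uniquely-Wilf hypothesis is concentrated entirely in the lower-bound step. This is exactly what prevents imposters such as $\I \cup \{21, 321\}$, which has the correct set $\C_3 = \{123,321\}$ but omits $k(k-1)\cdots 1$ for $k \geq 4$ and is therefore not uniquely-Wilf.
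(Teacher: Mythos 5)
Your proof is correct and follows essentially the same route as the paper's: the paper likewise notes that $\C_{\leq 3}^{\uparrow}=\Av(132,213,231,312)=\I\cup\D$ (your peak/valley argument fills in the detail the paper leaves to the reader) and then invokes the fact that an infinite uniquely-Wilf class containing $12$ and $21$ must contain all monotone permutations. Your version is just a more fully spelled-out rendering of the same two-inclusion argument.
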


\begin{proof}
Certainly $\I \cup \D$ is uniquely-Wilf. If $\C_3 = \{123, 321\}$ then $\C_{\leq 3}^{\uparrow} = \I \cup \D$ as any non-monotone permutation of size 4 contains at least one non-monotone permutation of size 3. But any infinite uniquely-Wilf class containing 12 and 21 must contain $\I \cup \D$.
\end{proof}

Up to symmetry, there is only one set of three permutations of size three containing $\{123, 321\}$ so the second result we need is:

\begin{proposition}
The only infinite uniquely-Wilf class $\C$ with $\C_3 = \{123, 132 , 321\}$ is $\C = \Av(213, 231, 312)$.
\end{proposition}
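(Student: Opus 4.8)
The plan is to first determine the exact structure of $\Av(213,231,312)$ and then to show, by induction on size, that any infinite uniquely-Wilf class $\C$ with $\C_3=\{123,132,321\}$ is forced to contain all of it. To describe the class, I would look at the first entry $\pi(1)$ of an arbitrary $\pi\in\Av(213,231,312)$: if $\pi(1)$ were a middle value, then a larger and a smaller entry both occur later and produce a $213$ or a $231$, so $\pi(1)$ must be the minimum or the maximum; and if $\pi(1)$ is the maximum, any later ascent creates a $312$, forcing $\pi$ to be decreasing. Peeling off leading minima then shows that every element has the form $\rho_m:=12\cdots m\oplus\delta_{s-m}$ — an increasing run on the smallest values followed by the remaining (largest) values in decreasing order, where $\delta_r=r(r-1)\cdots1$. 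In particular there are exactly $s$ such permutations of each size $s$, two of them monotone, so $\C\subseteq\Av(213,231,312)$ is a class with enumeration $c_s=s$.

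To see that the full class is genuinely uniquely-Wilf, I would record the containment relation among these permutations. For a non-monotone pattern the increasing part must embed in the increasing run and the decreasing part in the decreasing run, giving $\rho_j^{(k)}\le\rho_m^{(N)}$ if and only if $j\le m$ and $k-j\le N-m$; the monotone patterns are handled by noting that the longest increasing (respectively decreasing) subsequence of $\rho_m^{(N)}$ has length $m+1$ (respectively $N-m$). A short count then shows that \emph{every} permutation of size $k$ in the class is involved in exactly $N-k+1$ permutations of size $N$, so the class is $(k,N)$-balanced for all $k,N$ and hence uniquely-Wilf by the Observation characterising uniquely-Wilf classes as the $(k,n)$-balanced ones.

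For uniqueness, let $\C$ be as hypothesised. Being a class with $\C_3=\{123,132,321\}$ it satisfies $\C\subseteq\Av(213,231,312)$, and by Observation~\ref{obs:monotone} it contains both monotone permutations of every size. I would then induct on $N$, assuming $\C_{\le N-1}$ equals the full class. Writing $a_m\in\{0,1\}$ for the indicator that $\rho_m^{(N)}\in\C_N$ (for $1\le m\le N-2$), the containment data above turns each balancing constraint into a linear relation in the $a_m$. The $(3,N)$-balance among $123,132,321$ forces $a_1=a_{N-2}=1$, while the $(N-1,N)$-balance among the size-$(N-1)$ permutations yields $a_j+a_{j+1}=V$ for a common value $V$, the monotone equations giving $V=1+a_1=2$; since the $a_m$ are $0/1$, the relation $a_j+a_{j+1}=2$ forces every $a_m=1$. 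Thus $\C_N$ is the full class, completing the induction and hence the proof.

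The step I expect to be most delicate is the containment characterisation $\rho_j^{(k)}\le\rho_m^{(N)}$, and in particular arguing its necessity: that a decreasing subpattern of length at least two must sit entirely within the decreasing run, and that the increasing part cannot borrow an element from that run. Tied to this is the need to separate cleanly the genuinely non-monotone patterns ($k-j\ge 2$) from the monotone ones, since a one-element tail collapses $\rho_j^{(k)}$ to an increasing permutation and, if mis-indexed, would corrupt the bookkeeping in the balance equations. Once the containment relation and this case separation are nailed down, the linear-algebra step that forces all indicators to $1$ is routine.
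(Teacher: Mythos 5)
Your proof is correct, and while its first half coincides with the paper's, the uniqueness half takes a genuinely different route. Like the paper, you encode the members of $\Av(213,231,312)$ as pairs (length of increasing run, length of decreasing run) with the maximum always assigned to the decreasing run, observe that containment becomes the coordinatewise order on pairs, and count that each size-$k$ element lies below exactly $N-k+1$ elements of size $N$; the delicate point you flag (that a decreasing pattern of length at least two must land entirely in the decreasing run, and the increasing part cannot borrow from it) is real but routine, and the canonical convention makes the formula uniform across monotone and non-monotone patterns. For uniqueness, however, the paper argues by contradiction from a minimal missing size $n+1$: every size-$n$ element of the subclass would then have exactly one cover, which forces $(n-1,2)$ out, and propagating the exclusions upward produces, at size $n+2$, an element with no covers at all. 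You instead run a positive induction on $N$ and solve the balance equations over $\{0,1\}$ at each level: the $(N-1,N)$-balance gives $a_j+a_{j+1}=V$ along the chain, which alone would permit an alternating solution, and you break that parity ambiguity with the $(3,N)$-balance, which pins $a_1=a_{N-2}=1$, hence $V=2$ and all $a_j=1$. Your version is arguably more direct -- it stays within a single level, avoids the two-step upward propagation and the case split on $n=3$ versus $n\geq 4$ -- at the cost of invoking balance at two different pattern sizes rather than covers alone; the paper's cover-based argument has the advantage of adapting (as the authors note) to classify the \emph{finite} uniquely-Wilf classes with $|\F_3|=3$ as well.
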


\begin{proof}
Let $\C = \Av(213, 231, 312)$. Since all the permutations avoided in $\C$ are of size three, $\C = \C_{\leq 3}^{\uparrow}$. To verify that $\C$ is uniquely-Wilf is easy. The permutations in $\C$ have the structure shown in the picture below:

\vspace{\parskip}

\centerline{
\begin{tikzpicture}[scale=0.8]
\draw [color=lightgray, xstep=1.0cm,ystep=1.0cm]
(0,0) grid (2,2);
\draw (0,0) --(1,1);
\draw (1,2) --(2,1);
\end{tikzpicture}
}

That is, they can be represented by an increasing sequence followed by a decreasing one. There is an obvious one to one correspondence between such permutations and pairs $(a,b)$ of integers with $a \geq 0$ and $b \geq 1$. Here $a$ represents the size of the increasing sequence and $b$ the size of the decreasing one -- and we always choose the maximum element to be in the decreasing sequence. Thus, the representation of the increasing permutation of size $n$ in this form is as the pair $(n-1, 1)$. With respect to this representation $(a, b) \leq (c,d)$ as permutations if and only if $a \leq c$ and $b \leq d$ i.e.,  the containment order of permutations and the usual coordinatewise order coincide. In particular each permutation, $(a,b)$, of size $n$ has exactly $k+1$ extensions of size $n+k$ namely those corresponding to pairs $(a+x, b+k-x)$ for $0 \leq x \leq k$. Thus, $\C$ is uniquely-Wilf.

Note that, in $\C$ every permutation has exactly two covers since $(a,b)$ is covered by $(a+1, b)$ and by $(a, b+1)$.

Suppose that $\E$ were a proper uniquely-Wilf subclass of $\C$ containing all monotone permutations and with $\E_3 = \C_3$. Let $n+1$ be the least size of any permutation in $\C \setminus \E$ and note that $n \geq 3$. Since we omit one element of $\C$ from $\E$ in size $n+1$ there is an element of $\E$ of size $n$ with only one cover in $\E$. Therefore, as $\E$ is uniquely-Wilf, every element of $\E$ of size $n$ must have exactly one cover of size $n+1$.  In particular, the monotone permutation $(n, 1)$ must be the unique cover of $(n-1,1)$ belonging to $\E$ and hence its other cover $(n-1,2) \not \in \E$.

As $(n-2, 2) \in \E$, but $(n-1,2) \not \in \E$,  it must be the case that $(n-2, 3) \in \E$ since that is the only remaining cover of $(n-2,2)$ in $\C$. Now if $n = 3$ we already have a contradiction since $(1,3), (0,4) \in \E$ are the two covers of $(0,3) \in \E$. But if $n \geq 4$ then since $(n-3, 4)$ also covers $(n-3,3)$ we have $(n-3,4) \not \in \E$. Since $(n-1,2), (n-3,4) \not \in \E$ none of their covers can lie in $\E$. But these include 
$(n-1, 3)$ and $(n-2,4)$ which are the two covers of $(n-2,2)$. So $(n-2,2)$ has no covers in $\E$ and we have our contradiction (since $\E$ is infinite all monotone permutations have one cover in $\E$). The latter part of the argument is perhaps better understood through the following picture where filled circles represent elements of $\E$ and open ones elements of $\C \setminus \E$:

\centerline{
\begin{tikzpicture}[scale=0.8]
\draw[ultra thin] (0,0) -- (-1.5,1.5);
\draw[ultra thin] (0,0) -- (2,2);
\draw[ultra thin] (2,0) -- (0.5,1.5);
\draw[ultra thin] (2,0) -- (4,2);
\draw[ultra thin] (4,0) -- (2,2);
\draw[ultra thin] (4,0) -- (5.5,1.5);
\draw[ultra thin] (6,0) -- (4,2);
\draw[ultra thin] (6,0) -- (6.5,0.5);
\draw [fill] (0,0) circle (0.1);
\draw [fill] (2,0) circle (0.1);
\draw [fill] (4,0) circle (0.1);
\draw [fill] (6,0) circle (0.1);
\draw [fill] (-1,1) circle (0.1);
\draw [fill=white] (1,1) circle (0.1);
\draw [fill] (3,1) circle (0.1);
\draw [fill=white] (5,1) circle (0.1);
\draw [fill=white] (2,2) circle (0.1);
\draw [fill=white] (4,2) circle (0.1);
\node at (-4,0) {$n$};
\node at (-4,1) {$n+1$};
\node at (-4,2) {$n+2$};
\node at (0,-0.5) {\scriptsize$(n-1,1)$};
\node at (2,-0.5) {\scriptsize$(n-2,2)$};
\node at (4,-0.5) {\scriptsize$(n-3,3)$};
\node at (6,-0.5) {\scriptsize$(n-4,4)$};
\node at (2,2.5) {\scriptsize$(n-1,3)$};
\node at (4,2.5) {\scriptsize$(n-2,4)$};
\end{tikzpicture}
}
\end{proof}

The second half of the proof above can also be adapted to characterise the finite uniquely-Wilf classes containing 123, 321 and one other permutation of size three. They agree with the maximum such class to some size, then include none or half (rounded up if necessary) of the permutations in that class at the next size, and then no further permutations.

\begin{proposition}
\label{prop:wedge}
Let $\F \subseteq \S_3$ be the finite class with $\F_3 = \{123, 132, 231, 321\}$. The only infinite uniquely-Wilf class containing $\F$ is $\F^{\uparrow} = \Av(213, 312)$.
\end{proposition}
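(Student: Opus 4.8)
The first step is to pin down the class. Since $\F$ contains all permutations of size at most two and $\F_3$ omits precisely $213$ and $312$, we have $\F^{\uparrow} = \Av(213, 312)$; write $\C = \Av(213, 312)$. Both forbidden patterns have their least entry in the central position, so $\sigma \in \C$ exactly when no entry of $\sigma$ has a larger entry both to its left and to its right; equivalently $\C$ is the class of \emph{unimodal} permutations, those that increase up to their maximum and then decrease. Each $\sigma \in \C_n$ is then determined by recording, for every value $1, \dots, n-1$, whether it lies left or right of the maximum, so $|\C_n| = 2^{n-1}$; note also that the least entry of any member of $\C$ occupies one of its two ends.

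To show $\C$ is uniquely-Wilf I will verify $(k,n)$-balance for all $k, n$ using the Observation, by computing $M(\pi, n) = |\C_n \cap \Av(\pi)|$ for each $\pi \in \C_k$ and checking it is independent of $\pi$. The engine is a recursion obtained by deleting least entries. For $\tau \in \C_n$ let $\tau'$ be the member of $\C_{n-1}$ obtained by removing the least entry of $\tau$ (which sits at an end), and define $\pi \setminus \min \in \C_{k-1}$ analogously for $\pi$. Splitting occurrences of $\pi$ in $\tau$ according to whether they use the least entry of $\tau$ --- which can only play the role of the least entry of $\pi$, and only when the two least entries lie at corresponding ends --- gives $M(\pi, n) = M(\pi, n-1) + M(\pi \setminus \min, n-1)$. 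A double induction on $k$ and $n$, with the routine base cases $k = 1$ and $n < k$ (where every unimodal permutation avoids $\pi$, so $M = 2^{n-1}$), then yields the closed form $M(\pi, n) = \sum_{j=0}^{k-2} \binom{n-1}{j}$. As this depends only on $k$ and $n$, the class $\C$ is $(k,n)$-balanced for every pair, hence uniquely-Wilf. A useful by-product, used below, is that each member of $\C_m$ is covered by exactly $m+1$ members of $\C_{m+1}$, matching the number of distinct single-letter insertions into its left/right encoding.

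For uniqueness I follow the template of the preceding proposition, and I expect this half to contain the main obstacle. Any infinite uniquely-Wilf class containing $\F$ contains $12$ and $21$, hence all monotone permutations; assuming it lies inside $\C$ (so its size-three permutations are exactly $\C_3$), suppose for contradiction that it is a proper subclass $\E \subsetneq \C$, and let $n+1 \geq 4$ be the least size with $\E_{n+1} \subsetneq \C_{n+1}$, so that $\E_{\leq n} = \C_{\leq n}$ and $D = \C_{n+1} \setminus \E_{n+1} \neq \emptyset$. Since each element of $\C_n$ has exactly $n+1$ covers in $\C$, the $(n,n+1)$-balance of $\E$ forces every element of $\C_n$ to have the same number $g \geq 1$ of its covers lying in $D$. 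Unlike the previous proposition, where two covers per permutation made a single deletion propagate immediately to a permutation with no surviving cover, here the cover count grows with the size and the balance conditions at size $n+1$ alone do not force $D = \emptyset$: one can exhibit nonempty $D$ (necessarily closed under the reverse automorphism of $\C$) meeting every $(k,n+1)$-balance. The contradiction must therefore come from propagating upward to sizes $n+2, n+3, \dots$, describing the covers of surviving permutations via the single-letter insertions of their encodings, tracking how a forced uniform cover-loss at one size dictates the absentees at the next, and exhibiting some permutation --- most plausibly a monotone or near-monotone one, whose covers are the most rigid --- that is eventually driven to the wrong number of covers in $\E$. Isolating the right invariant to carry up the class is the heart of this half.
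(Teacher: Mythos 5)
Your first half---that $\C = \Av(213,312)$ is uniquely-Wilf---is correct and takes a genuinely different route from the paper's. The delete-the-minimum recursion $M(\pi,n) = M(\pi,n-1) + M(\pi\setminus\min,n-1)$ is valid: when the minima of $\tau$ and $\pi$ sit at corresponding ends, avoidance of $\pi$ reduces to avoidance of $\pi$ \emph{and} of $\pi\setminus\min$, and since $\Av(\pi\setminus\min)\subseteq\Av(\pi)$ the intersection collapses, which is the small point your one-line justification glosses over but which does go through. The closed form $\sum_{j=0}^{k-2}\binom{n-1}{j}$ checks against the base cases and the recursion, depends only on $k$ and $n$, and also delivers the cover count $m+1$ you use later. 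The paper instead encodes permutations as $\{L,R\}$-words and cites a length-preserving bijection (built inductively on minimal superwords of $\alpha L$ versus $\beta R$) from the Albert--Li reference; your computation is self-contained and arguably cleaner for this particular class.

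The uniqueness half, however, is not a proof: you correctly diagnose why the cascade argument of the preceding proposition fails here (each $\sigma\in\C_n$ has $n+1$ covers, so deleting permutations at one level forces only a uniform deficit $g$ per permutation and does not propagate to an immediate contradiction), but you then stop at ``isolating the right invariant to carry up the class is the heart of this half''---and that invariant is precisely the missing content. The paper's argument is global rather than level-by-level. A proper subclass $\E\subsetneq\Av(213,312)$ containing $\F_3$ has a bounded number of alternations between increasing and decreasing runs, hence is polynomially enumerated (Kaiser--Klazar), hence by the Homberger--Vatter structure theorem is a finite disjoint union of grid classes $\Grid^{f}(\tilde\rho)$ of properly pegged permutations. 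Choosing $\tilde\rho$ with the maximal number $m$ of non-$\bullet$ decorations and inflating its intervals far enough yields a permutation with exactly $m$ covers, so unique-Wilfness forces \emph{every} sufficiently large permutation of $\E$ to have exactly $m$ covers; a sequence of ``surplus cover'' arguments applied to the monotone permutations then pins $m=2$, restricts the decorated patterns to $1^{+}2^{-}$ (or its mirror) together with $1^{+}$, $1^{-}$, $1^{\bullet}2^{-}$ and all-$\bullet$ patterns, and finally the permutation $231$ and its iterated covers would all have to be realised by all-$\bullet$ pegs, contradicting the finiteness of the peg set. Without this structural input (or a genuine replacement for it), the proposition remains unproved in your write-up.
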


\begin{proof}

That $\Av(213, 312)$ is uniquely-Wilf is proven in \cite{AlbertLi} so we provide only a short sketch of the argument here for the reader's convenience. Permutations in this class have the wedge-shaped structure shown below:

\vspace{\parskip}

\centerline{
\begin{tikzpicture}[scale=0.8]
\draw [color=lightgray, xstep=1.0cm,ystep=2.0cm]
(0,0) grid (2,2);
\draw (0,0) --(1,2);
\draw (1,2) --(2,0);
\end{tikzpicture}
}

The permutations of size $n$ in the class are naturally encoded by binary sequences of length $n-1$ over the alphabet $\{L, R\}$ obtained by reading the permutation from bottom to top and noting which ``side'' of the picture the corresponding element belongs. The containment relation in the class is exactly the subsequence containment relation for these words. Consider two words $\alpha L$ and $\beta R$ where $\alpha$ and $\beta$ are words of the same length. A word contains $\alpha L$ but none of its prefixes do if and only if it is of the form $A R^k L$ where $A$ contains $\alpha$ but none of its prefixes do and  $k \geq 0$. But similarly a word contains $\beta R$ but none of its prefixes do if and only if it is of the form $B L^k R$ where $B$ contains $\beta$ but none of its prefixes do. This sets up a length-preserving bijection between the minimal words containing $\alpha L$ and those containing $\beta R$ whenever we have such a bijection with respect to $\alpha$ and $\beta$. But of course such a bijection exists when $\alpha$ and $\beta$ are both the empty word, so induction shows that this bijection exists in general for any two words of the same length. This bijection then extends to a length-preserving bijection between all words containing $\alpha L$ and all words containing $\beta R$ since these are obtained from words of the first type by concatenation with an arbitrary word. This establishes that $\Av(213, 312)$ is uniquely-Wilf.

Now suppose that $\E$ is a proper subclass  of $\Av(213, 312)$ that contains $\F_3$. The permutations in $\E$ can be partitioned into their maximal monotone intervals which must be alternately increasing and decreasing when considered from bottom to top. There must be a bound on the number of such intervals (since $\E$ is proper). Therefore the structure of a permutation in $\E$ is determined (essentially) by a finite tuple of non-negative integers summing to its size and the enumeration of $\E$ is polynomially bounded and hence ultimately polynomial (\cite{KK}).

The structure of polynomial classes is well-understood and we will make use of a form introduced in \cite{HV} -- we sketch only the necessary details here and refer to that paper for the full results. Define a \emph{peg permutation} to be a permutation written in one line notation where each of its elements is decorated with a superscript from the set $\{+, -, \bullet\}$. For example, $2^{-} 3^{-} 1^{\bullet}$ is a peg permutation. 

Given a pegged permutation $\tilde{\rho}$ the set of permutations $\Grid(\tilde{\rho})$ consists of all those permutation that can be obtained from $\rho$ by inflating each point into a monotone (possibly empty) interval according to the associated symbol: for $+$ an arbitrary increasing interval, for $-$ an arbitrary decreasing interval, and for $\bullet$ either a single point or nothing at all. For instance $4321$, $2431$ and $432651$ all belong to $\Grid(2^{-} 3^{-} 1^{\bullet})$ but $123$ does not.

Departing from the terminology of \cite{HV} we say that a pegged permutation is \emph{properly pegged} if it contains no two element monotone interval of any of the following forms:
\begin{equation}
\label{eq:forbidden-patterns}
1^{+} 2^{+}, \, 1^{\bullet} 2^{+}, \: 1^{+} 2^{\bullet}, \, 2^{-} 1^{-}, \, 2^{\bullet} 1^{-}, \, 2^{-} 1^{\bullet}.
\end{equation}
To be explicit, the first condition above means that there are no two consecutive elements $a$ and $a+1$ in the pegged permutation both of which are decorated with $+$. We define $\Grid^{f}(\tilde{\rho})$ to be the subset of $\Grid(\tilde{\rho})$ consisting of all inflations of $\rho$ in which any symbol decorated with $+$ or $-$ is inflated into a monotone interval of size at least two, and any symbol decorated with $\bullet$ is replaced by a single point (the superscript stands for ``filled''). Restating the conclusion of Theorem 1.4 of \cite{HV} slightly:

\emph{There exists a finite set $\tilde{H}$ of properly pegged permutations such that $\E$ is the disjoint union of the sets $\Grid^{f}(\tilde{\rho})$ for $\tilde{\rho} \in \tilde{H}$.}

We remark that since $\E$ is a class that, if $\tilde{\rho} \in \tilde{H}$ then in fact $\Grid(\tilde{\rho}) \subseteq \E$ since $\Grid(\tilde{\rho})$ is just the downward closure of $\Grid^{f}(\tilde{\rho})$.

Choose $\tilde{\rho} \in \tilde{H}$ in such a way that the total number, $m$ of $+$ and $-$ decoration on $\tilde{\rho}$ is maximal (among all the elements of $\tilde{H}$) and, among all such, the underlying permutation $\rho$ is maximal in the pattern containment order. Equality is allowed in either case. For instance it might be the case that $2^+ 4^+ 1^\bullet 3^-, 2^- 4^\bullet 1^- 3^+ \in \tilde{H}$ . If no other permutation in $\tilde{H}$ contains more than three non-$\bullet$ decorations and no permutation with three non-$\bullet$ decorations contains 2413 then either of these would be a suitable choice for $\tilde{\rho}$. Form a permutation $\pi \in \Grid^{f}(\tilde{\rho})$ by inflating each such non-$\bullet$ symbol to a monotone interval having at least $K$ elements where $K$ is greater than the square of the largest size of any permutation underlying an element of $\tilde{H}$. This choice of $K$ ensures (easily) that the only covers of $\pi$ in $\E$ are obtained by extending the inflated intervals by one point. In particular $\pi$ has exactly $m$ covers in $\E$. Therefore, as $\E$ is uniquely-Wilf, \emph{all} sufficiently large permutations in $\E$ must have exactly $m$ covers.

Now suppose that $\Grid(1^+ 3^+ 2^\bullet) \subseteq \E$. Then any permutation that consists of a monotone increasing sequence followed by one more element belongs to $\E$ as these all occur as elements of $\Grid(1^+ 3^+ 2^\bullet)$. In particular, for any $n$, the monotone increasing permutation of size $n$ has at least $n+1$ covers in $\E$ which contradicts the observation above that all sufficiently large permutations in $\E$ must have exactly $m$ covers. So it follows by contradiction that $\Grid(1^+ 3^+ 2^\bullet) \not \subseteq \E$. This, together with the fact that $\E$ is a class, implies that no $\tilde{\rho} \in \tilde{H}$ contains a 132 pattern with both the 1 and the 3 carrying $+$~decorations -- we'll abbreviate this as ``no $1^+ 3^+ 2$ pattern''. Similarly, there can be no $\tilde{\rho} \in \tilde{H}$ containing a $2 3^- 1^-$ pattern.

This technique of showing that certain restrictions on the labelings of the permutations in $\tilde{H}$ must be satisfied because otherwise some permutation has ``too many'' covers contradicting the assumption that $\E$ is uniquely-Wilf is used throughout the remainder of the argument.

The underlying permutations of $\tilde{H}$ belong to $\Av(312, 213)$ so if there were three or more non-$\bullet$ decorations then two of them would occur on the same side and would therefore have the same sign. Since the intervals described in (\ref{eq:forbidden-patterns}) cannot occur they could not be consecutive, and this would create one of the forbidden situations above. It follows that there can be at most two non-$\bullet$ decorations on each permutation of $\tilde{H}$ and, if there are two, they must have differing signs.

Therefore, every sufficiently large permutation in $\E$ has either exactly one or exactly two covers. The former case is easily dismissed -- since the increasing permutation has only the increasing cover the only permutation in $\tilde{H}$ carrying a $+$ decoration would have to be $1^+$ and similarly $1^-$ would be the only one containing the $-$ decoration (and all the monotone permutations of $\E$ would be represented by these along with $1^\bullet$). The rest of the permutations (if any) would carry only $\bullet$s so define a finite set -- but if any existed there would a non-monotone permutation with no covers. So $\E = \I \cup \D$ but supposedly $\E_3 = \F_3$ which contains non-monotone permutations.

Supposing that each sufficiently large permutation in $\E$ has exactly two covers also implies that  
\[
\Grid(2^+ 3^{-} 1^{\bullet})\not \subseteq  \E
\]
since otherwise the monotone increasing permutation of size $n$ would then have all of:
\[
1 2 3 \cdots (n+1), \, 2 3 \cdots (n+1) 1, \, \mbox{and} \, 1 2 3 \cdots (n-1) (n+1) n
\]
as covers. Similarly, $\Grid(1^\bullet 3^+ 2^-) \not \subseteq \E$.
Also,
\[
\Grid(1^+ 3^\bullet 4^\bullet 2^-) \not \subseteq \E \, \text{and} \, \Grid(2^+ 4^\bullet 3^\bullet 1^-) \not \subseteq \E
\]
as otherwise the monotone increasing (or decreasing in the second case) permutation would have at least three extensions. 

Putting all these observations together shows that the only possible elements of $\tilde{H}$ containing both a $+$ and a $-$ decoration are $1^+ 2^-$ and $2^+ 1^-$. Only one can occur as, if they both did, increasing permutations would have at least three covers in $\E$. 

Suppose without loss of generality that $1^+ 2^- \in \tilde{H}$. We claim that, other than $1^+$, $1^-$, and $1^\bullet 2^-$ all other patterns in $\tilde{H}$ contain only $\bullet$s as decorations. 

To rule out other patterns containing symbols $x^+$ observe first that $\Grid(2^+ 1^\bullet) \not \subseteq \E$ since if it were, there would be an extra extension to the monotone increasing permutation beyond the two already available from $\Grid(1^+ 2^-)$. It follows, that the only possible $+$~decoration on an element of $\tilde{H}$ can occur on the first (and least) element of a pattern. No decorations of $132$ can belong to $\tilde{H}$ since we can't place $\bullet$s on both the 3 and the 2 as that case is already covered by $1^+ 2^-$ and other labelings are forbidden by  (\ref{eq:forbidden-patterns}) or previous observations. By  (\ref{eq:forbidden-patterns}) we cannot begin $1^+ 2^\bullet$. Finally, $\Grid(1^+ 3^{\bullet} 4^{\bullet} 2^{\bullet})$ also gives extra extensions to the monotone increasing permutation. Any larger labeled pattern includes one of these forbidden cases, and so we've ruled out the presence of a $+$ except for $1^+ 2^-$ and $1^+$. 

Now we want to rule out patterns in $\tilde{H}$ other than $1^-$, $1^\bullet 2^-$ and $1^+ 2^-$ that contain symbols $x^-$. We cannot appeal to symmetry because we've broken that by assuming $\Grid(1^+ 2^-)$ is contained in $\E$. In $\Grid(2^{\bullet} 3^{-} 1^{\bullet})$ we gain an extra extension to the monotone decreasing permutation so no pattern in $\tilde{H}$  can contain a $2 3^- 1$ pattern. Similarly no $2 3 1^-$ patterns can occur. Together these rule out all possibilities of having a symbol $x^-$ other than the ones which are explicitly allowed.

Now $231 \in \E$ but is not in $\Grid(1^+)$, $\Grid(1^-)$, $\Grid(1^\bullet 2^-)$, or $\Grid(1^+ 2^-)$ so its representation must be as $\Grid(2^\bullet 3^\bullet 1^\bullet)$. The same applies for its supposed two covers in $\E$, each of their covers, and so on. However, $\tilde{H}$ is finite, so we have our final contradiction.
\end{proof}

The proof of Proposition \ref{prop:wedge} has obviously been the most intricate of the arguments so far. Fundamentally though it is not so different in character to the previous ones (indeed it would have been possible to absorb those into its proof). Working in a polynomial class is a powerful restriction because of the limit it imposes on the number of extensions of any sufficiently large permutation. This then enables the ``surplus extensions'' technique to rule out many configurations. The second author, as part of his current PhD research (to be submitted in 2020), has proven that the only uniquely-Wilf polynomial classes are the ones that already arise in Proposition \ref{prop:wedge}.

\section{Conclusion}

Our goal of characterising the uniquely-Wilf permutation classes has not been fully realised as the possibility still exists that there may be some which we have not discovered. If so, they will contain all of $\S_3$ and at least 22 elements of $\S_4$. 

It is notable that most of the arguments by contradiction in Proposition \ref{prop:wedge} focused on the existence of surplus extensions to a monotone permutation. The monotone permutations are already somewhat special in the order of pattern containment since they are the only permutations that cover only one other permutation. Therefore it does not seem unreasonable to ask:

\begin{question}
What can be determined about classes in which all non-monotone permutations are Wilf-equivalent?
\end{question}

For polynomial classes, the second author has answered this question completely (forthcoming as part of his PhD thesis in 2020) and, other than the examples already discovered above there are only a few new examples for instance the class $\Av(123, 312)$ which was considered in \cite{AlbertLi}. 

The authors would like to acknowledge the helpful comments of two referees, which have improved the presentation of this work significantly.

\nocite{*}

\end{document}